\newtheorem{theorem}{Theorem}%[section]
\DeclareMathOperator{\supp}{supp}
\title{Fourier dimension of the cone}
\author{Terence L.~J.~Harris}
\address{Department of Mathematics, Cornell University, Ithaca, NY 14853, USA}
\email{tlh236@cornell.edu}
\begin{document} 
\begin{abstract} It is shown that the cone in $\mathbb{R}^{d+1}$ has Fourier dimension $d-1$. This verifies a conjecture of Fraser and Kroon. 
\end{abstract}
\maketitle
The Fourier dimension of a set $A \subseteq \mathbb{R}^{d+1}$ is defined by
\[ \dim_F A = \sup\left\{ s \in [0,d+1]: \exists \mu \in \mathcal{P}(A) \text{ s.t.~} \lvert\widehat{\mu}(\xi) \rvert \lesssim \lvert \xi \rvert^{-s/2} \; \forall \xi \in \mathbb{R}^{d+1} \right\}, \]
where $\mathcal{P}(A)$ is the set of Borel probability measures on $\mathbb{R}^{d+1}$ satisfying $\mu(A)=1$. 
In \cite[Problem~6.1]{fraserkroon} it was conjectured that for $d \geq 1$, the cone
\[ \Gamma^d = \left\{ (\xi_1, \dotsc, \xi_d, \xi_{d+1}) \in \mathbb{R}^{d+1}:  \lvert (\xi_1, \dotsc, \xi_d ) \rvert = \vert \xi_{d+1} \rvert \right\}, \]
has Fourier dimension equal to $d-1$. This is verified by the following.
\begin{theorem} For any $d \geq 1$, the Fourier dimension of $\Gamma^d$ is $d-1$. \end{theorem}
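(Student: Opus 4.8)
The plan is to establish the two matching bounds $\dim_F\Gamma^d\geq d-1$ and $\dim_F\Gamma^d\leq d-1$ separately, the first by exhibiting one good measure and the second by showing that no measure can do better. For the lower bound I would take $\mu$ to be a smooth, compactly supported density against surface measure on the piece $\{(r\omega,r):r\in[1,2],\ \omega\in S^{d-1}\}$ of one nappe, normalized to a probability measure. Writing the Fourier integral in the coordinates $(r,\omega)$, the phase is $r(\omega\cdot\xi'+\xi_{d+1})$, whose $\omega$-Hessian is the nondegenerate second fundamental form of $S^{d-1}$; thus $\Gamma^d$ has $d-1$ nonvanishing principal curvatures and one flat ruling direction. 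Stationary phase in $\omega$ yields $|\widehat\mu(\xi)|\lesssim|\xi|^{-(d-1)/2}$, saturated for $\xi$ on the dual cone $\mathbb{R}\cdot(\omega,-1)$ (where the $r$-integration supplies no extra decay) and strictly better elsewhere, with rapid decay along the $\xi_{d+1}$-axis from the smooth cutoff. Hence $\dim_F\Gamma^d\geq d-1$.

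For the upper bound, suppose $\mu\in\mathcal P(\Gamma^d)$ satisfies $|\widehat\mu(\xi)|\leq C|\xi|^{-s/2}$; the goal is to force $s\leq d-1$. First I would reduce to a measure on a compact piece away from the vertex: using the dilation invariance of $\Gamma^d$ together with multiplication by a smooth bump one obtains $\mu'=\phi\mu\neq 0$ supported in $\{r\in[1,2]\}$ with $\widehat{\mu'}=\widehat\mu*\widehat\phi$ still decaying at rate $s$, since convolution with a Schwartz function preserves polynomial decay. The point is then to probe the decay only in the degenerate directions. Let $N_R$ be the $1$-neighborhood of the dual cone truncated to $|\xi|\approx R$, a thin slab of volume $\approx R^d$. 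The hypothesis gives the easy estimate $\int_{N_R}|\widehat{\mu'}(\xi)|^2\,d\xi\lesssim R^{-s}\cdot R^{d}=R^{d-s}$.

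The heart of the matter is the matching lower estimate $\int_{N_R}|\widehat{\mu'}|^2\gtrsim R$, valid for every probability measure on the cone piece. The mechanism is the Knapp phenomenon: for $\xi=R(\omega_0,-1)$ the phase equals $-rR(1-\omega\cdot\omega_0)$, which is $O(1)$ on the entire $R^{-1/2}$-cap about $\omega_0$, so disintegrating $\mu'=\int\mu_\omega\,d\sigma(\omega)$ along rulings one expects $|\widehat{\mu'}(R(\omega_0,-1))|\gtrsim\sigma(\mathrm{cap})$. Summing the resulting planks, each of volume $\approx R^{(d+1)/2}$, over a maximal $R^{-1/2}$-separated family of $\approx R^{(d-1)/2}$ caps and applying Cauchy--Schwarz gives $\sum_j\sigma_j^2\gtrsim(\sum_j\sigma_j)^2/R^{(d-1)/2}\gtrsim R^{-(d-1)/2}$, which produces $\int_{N_R}|\widehat{\mu'}|^2\gtrsim R^{(d+1)/2}\cdot R^{-(d-1)/2}=R$. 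Combining the two estimates yields $R\lesssim R^{d-s}$, hence $s\leq d-1$.

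I expect the main obstacle to be making this lower estimate rigorous, since the pointwise bound $|\widehat{\mu'}(R(\omega_0,-1))|\gtrsim\sigma(\mathrm{cap})$ ignores cancellation between the coherent cap and the oscillatory contribution of far-away rulings. The clean way around this is to avoid first moments altogether and bound $\int_{N_R}|\widehat{\mu'}|^2$ from below by duality, pairing $\widehat{\mu'}$ with a wave packet $G$ that equals $1$ on an $R^{-1}$-neighborhood of the cone piece and whose Fourier transform is essentially supported in $N_R$: since $\int G\,d\mu'\approx 1$ while $\|G\|_2^2\approx R^{-1}$, Cauchy--Schwarz gives the bound, and the only remaining work is to control the Schwartz tails of $\widehat G$ outside $N_R$ using the rate-$s$ decay of $\widehat{\mu'}$. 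Establishing this reverse square-function estimate, a lower bound on the $L^2$ mass that the cone neighborhood necessarily captures, is where essentially all the difficulty lies; the surrounding volume and curvature computations are routine.
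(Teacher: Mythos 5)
Your lower bound is the same construction as the paper's and is fine. For the upper bound, your strategy --- squeezing $\int_{N_R}|\widehat{\mu'}|^2$ between an upper bound $R^{d-s}$ from the pointwise decay and a lower bound $R$ from positivity --- is a genuinely different route from the paper's and is in principle viable, but the central estimate $\int_{N_R}|\widehat{\mu'}|^2\gtrsim R$ is not actually established, and you say yourself that this is where essentially all the difficulty lies. Two concrete problems. First, the Knapp cap-counting argument only gives $|\widehat{\mu'}(R(\omega_0,-1))|\gtrsim \sigma(\mathrm{cap})$ modulo cancellation between caps, and that cancellation is real, so the first-moment version does not survive as stated (you acknowledge this). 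Second, in the duality version the claim that $\widehat{G}$ is essentially supported in $N_R$ is not correct: for $G$ a smooth bump on the $R^{-1}$-neighbourhood of the truncated cone, $\widehat{G}$ has magnitude about $R^{-1}\rho^{-(d-1)/2}$ on the unit neighbourhood of the dual cone at \emph{every} dyadic scale $1\le\rho\le R$, so the contribution from scales $\rho<R$ is a polynomially large piece of the dual cone, not a Schwartz tail. It can be controlled --- Cauchy--Schwarz on each dyadic shell together with the hypothesis $|\widehat{\mu'}|\lesssim \rho^{-s/2}$ bounds the total off-$N_R$ contribution by $\sum_{\rho\le R/2} R^{-1}\rho^{(d+1-s)/2}\approx R^{(d-1-s)/2}$, which tends to $0$ precisely under the contradiction hypothesis $s>d-1$ --- but this is exactly where the hypothesis must re-enter to make the lower bound nontrivial, and it is absent from your sketch. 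Until that dyadic tail estimate is carried out, the upper bound is incomplete.

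For comparison, the paper avoids all of this machinery. It symmetrizes $\mu$ over rotations in the first $d$ coordinates to get $\nu$ (which inherits the decay), writes $\widehat{\nu}(\xi,|\xi|)$ as an integral of $\widehat{\sigma}(z\xi)$ against the one-dimensional pushforward $\pi_{\#}\mu$, and uses the asymptotics of $\widehat{\sigma}$ to exhibit a main term
\[ |\xi|^{-(d-1)/2}\int z^{-(d-1)/2}\cos^2\left(2\pi|\xi|z-\tfrac{\pi(d-1)}{4}\right)\,d\pi_{\#}\mu(z). \]
The only input needed is that the $\cos^2$ average tends to $1/2$, which holds because the decay hypothesis forces $\pi_{\#}\mu$ to be absolutely continuous with density in $L^2\subseteq L^1$. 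That yields a pointwise lower bound $\gtrsim|\xi|^{-(d-1)/2}$ along the dual cone and an immediate contradiction, with no planks, wave packets, or square functions. If you want to complete your version, the missing lemma is the dyadic tail bound above; otherwise, the rotational symmetrization reducing everything to a one-dimensional absolute continuity statement is the shortcut worth knowing.
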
 
\begin{proof} The case $d =1$ is trivial, so assume that $d \geq 2$. The lower bound $\dim_F \Gamma \geq d-1$ follows (for example) by using \eqref{sphereasymp} below and considering the measure defined by 
\[ f \mapsto \int_{\mathbb{R}} \psi(r) \int_{S^{d-1}} f(r x, r) \, d\sigma(x) \, dr, \]
for any non-negative Borel function $f$, where $\sigma$ is the rotation invariant Borel probability measure on $S^{d-1}$, and $\psi$ is a bump function on $[1,2]$ with $\int \psi = 1$. 

Suppose for a contradiction that $\dim_F \Gamma > d-1$. Then %by Lemma~1 from Ekstrom-Persson-Schmeling 
there exists $\alpha >d-1$ %, $\epsilon >0$
 and a Borel probability measure $\mu$ on $\Gamma$, such that 
\begin{equation} \label{mudecay}  \lvert\widehat{\mu}(\xi) \rvert  \lesssim \lvert\xi\rvert^{-\alpha/2} \qquad\forall \xi \in \mathbb{R}^{d+1}. \end{equation}
By symmetry, and by replacing $\mu$ with $f \mu$ for an appropriate bump function $f$ (see \cite[Lemma~1]{ekstrom}), it may be assumed that for some $\epsilon>0$, 
\begin{equation} \label{sptcdn} \supp \mu \subseteq \{ (\xi, \lvert\xi\rvert ) \in \mathbb{R}^d \times \mathbb{R}: \epsilon \leq \lvert\xi\rvert  \leq 1/\epsilon \}. \end{equation}
 Let $\nu$ be the Borel probability measure on $\Gamma$ defined by
\begin{equation} \label{nudefn} \int f \, d\nu = \int_{\mathbb{R}^d \times  \mathbb{R}}  \int_{S^{d-1}} f(\lvert x\rvert w, z) \, d\sigma(w) \, d\mu(x, z), \end{equation}
for any non-negative Borel function $f$. Then 
\begin{align*} \widehat{\nu}(\xi) &= \int_{\mathbb{R}^d \times \mathbb{R}} \int_{S^{d-1}} e^{-2\pi i \langle \xi, (\lvert x\rvert w  , z) \rangle } \, d\sigma(w) \, d\mu(x,z)   \\
&=  \int_{\mathbb{R}^d \times \mathbb{R}} \int_{O(d)} e^{-2\pi i \langle \xi, (Ux , z) \rangle } \, d\lambda(U) \, d\mu(x,z) \\
&= \int_{O(d)} \widehat{ \mu} (U^*(\xi_1,\dotsc ,\xi_d), \xi_{d+1} ) \, d\lambda(U),  \end{align*} 
where $\lambda$ is the Haar probability measure on $O(d)$. Hence $\nu$ satisfies
\begin{equation} \label{nudecay} \lvert \widehat{\nu} (\xi)\rvert   + \left\lvert \widehat{\widetilde{\nu}} (\xi) \right\rvert\lesssim \lvert \xi\rvert^{-\alpha/2} \qquad\forall \xi \in \mathbb{R}^{d+1}, \end{equation}
where $\widetilde{\nu}$ is the pushforward of $\nu$ under $(x_1, \dotsc, x_d, x_{d+1}) \mapsto (x_1, \dotsc, x_d, -x_{d+1})$.  Let $\pi: \mathbb{R}^{d+1} \to \mathbb{R}$ be the map $(x_1,\dotsc,x_d,x_{d+1}) \mapsto x_{d+1}$. Since $\supp \mu \subseteq \Gamma$, and by \eqref{sptcdn}, the formula \eqref{nudefn} can also be written as 
\[ \int f \, d\nu = \int_{\mathbb{R}} \int_{S^{d-1}} f(zw, z) \, d\sigma(w) \, d\pi_{\#}\mu(z). \] 
Hence another expression for $\widehat{\nu}$ is
\[ \widehat{\nu}(\xi) = \int e^{-2\pi i z \xi_{d+1} } \widehat{\sigma}(z (\xi_1, \dotsc, \xi_d)) \, d\pi_{\#}\mu(z). \]
Let $\omega_{d-1}$ be the surface area of $S^{d-1}$. The asymptotic formula (see \cite[Appendix~B]{grafakos})
\begin{equation} \label{sphereasymp} \widehat{\sigma}(\xi) = \frac{2}{\omega_{d-1}}\lvert \xi\rvert^{-(d-1)/2} \cos\left(2 \pi \lvert \xi\rvert  - \frac{\pi(d-1)}{4}  \right) + O\left(\lvert \xi\rvert^{-(d+1)/2}\right), \end{equation}
gives, by taking $\xi_{d+1} = \lvert (\xi_1, \dotsc, \xi_d)\rvert$, 
\begin{multline*}  e^{ \frac{i\pi(d-1)}{4} } \widehat{\nu}(\xi, \lvert \xi\rvert ) +  e^{ \frac{-i\pi(d-1)}{4} }\widehat{ \widetilde{\nu} }(\xi, \lvert \xi\rvert )  \\
= \frac{4}{\omega_{d-1}}  \lvert \xi\rvert^{-(d-1)/2} \int z^{-(d-1)/2} \cos^2\left( 2 \pi \lvert \xi\rvert  z  - \frac{\pi(d-1)}{4} \right) \, d\pi_{\#}\mu(z) \\ +O\left(\lvert \xi\rvert^{-(d+1)/2}\right). \end{multline*}
Comparing \eqref{nudecay} to the above will give a contradiction, by the following identity:
\begin{equation} \label{halfidentity} \lim_{r \to \infty} \int \cos^2\left(r z+t \right) \, d\pi_{\#}\mu(z) = 1/2 \qquad\forall t \in \mathbb{R}.  \end{equation}
It remains to prove \eqref{halfidentity}. Since $d \geq 2$ and $\alpha/2 > (d-1)/2 \geq 1/2$, condition \eqref{mudecay} gives $\pi_{\#} \mu \in L^2(\mathbb{R})$ (see e.g.~\cite[Theorem~3.3]{mattila}). Hence $\pi_{\#} \mu \in L^1(\mathbb{R})$ with $\lVert \pi_{\#} \mu \rVert_1=1$, and \eqref{halfidentity} then follows by approximating $\pi_{\#} \mu$ in $L^1$ with a finite linear combination of characteristic functions of disjoint intervals.  \end{proof}

\end{document}